\newtheorem{theorem}{Theorem}[section]
\newtheorem{lemma}{Lemma}[section]
\newtheorem{remark}{Remark}[section]
\newtheorem{definition}{Definition}[section]
\def\BE{\begin{equation}}
\def\EE{\end{equation}}
\newcommand{\bey}{\begin{eqnarray}}
\newcommand{\eey}{\end{eqnarray}}
\newcommand{\beyy}{\begin{eqnarray*}}
\newcommand{\eeyy}{\end{eqnarray*}}
\begin{document}

\title{$\ \ $ On isometric minimal immersion of a singular  non-CSC extremal K$\ddot{a}$hler metric into  3-dimensional space forms}
\author{Zhiqiang Wei\footnotemark[1], Yingyi Wu\footnotemark[2]}

\date{}
\maketitle

\begin{abstract}
On any compact Riemann surface there always exists  a singular  non-CSC (constant scalar curvature) extremal K$\ddot{a}$hler metric which is  called a non-CSC  HCMU (the Hessian of the Curvature of the Metric is Umbilical) metric. In this paper, by moving frames, we show that any  non-CSC HCMU metric can not be isometrically minimal immersed into 3-dimensional real space forms even locally. In general, any non-CSC HCMU metric can not be isometrically immersed into  3-dimensional real space forms with constant mean curvature (CMC).
\end{abstract}

\footnotetext[1]{Supported by Natural Science Foundation of Henan, No.202300410067 an the National Natural Science Foundation of China No.12171140.}
\footnotetext[2]{Supported by the National Natural Science Foundation of China No.11971450.}
\section{Introduction}
~~~~~Since Calabi proposed the famous Calabi conjecture, K$\ddot{a}$hler-Einstein metric is one of the hot topics in geometry. For the existence of K$\ddot{a}$hler-Einstein metrics, one can refer to \cite{Yau1,Yau2} \cite{Au}. In 1982, Calabi\cite{Ca} replaced K$\ddot{a}$hler-Einstein metric with extremal K$\ddot{a}$hler metric.  In a fixed K\"{a}hler class, an extremal K\"{a}hler metric is the critical
point of the following Calabi energy functional
$$
\mathcal{C}(g)=\int_{M} R^2 dg,
$$
where $R$ is the scalar curvature of the metric $g$ in the given K$\ddot{a}$hler class. The Euler-Lagrange equations of $\mathcal{C}(g)$ are $R_{,\alpha
\beta}=0$ for all indices $\alpha, \beta$, where $R_{,\alpha\beta}$ is the second-order $(0,2)$
covariant derivative of $R$. When $M$ is a compact Riemann surface,
 Calabi in \cite{Ca} proved that an extremal K$\ddot{a}$hler metric is a CSC (constant scalar curvature) metric. \par

A natural question is whether or not an extremal K$\ddot{a}$hler metric with singularities on a compact Riemann surface is still a CSC metric. In \cite{Ch1}, X.X.Chen  first gave an example of a non-CSC extremal K\"{a}hler metric with singularities. We often call a non-CSC extremal K$\ddot{a}$hler metric with finite singularities on a compact Riemann surface a non-CSC HCMU(the Hessian of the Curvature of the Metric is Umbilical) metric. In \cite{CW2},\cite{Xb}, Q.Chen, B.Xu and Y.Y.Wu reduced the existence of a non-CSC HCMU metric to the existence of a meromorphic 1-form on the underlying Riemann surface. It is interesting that on any compact Riemann surface there always exists a non-CSC HCMU metric. For more properties of non-CSC HCMU metrics, one can refer to \cite{Ch2},\cite{CCW},\cite{CW1},\cite{LZ}, \cite{WZ},\cite{Wei1}  and the references cited in these papers.\par

Recently, isometric immersions of  a non-CSC HCMU metric into some ``good" higher dimensional spaces have been studied. In \cite{WP}, C.K.Peng and Y.Y.Wu proved that any non-CSC HCMU metric can be locally isometric  immersed into 3-dimension Euclidean space $\mathbb{E}^{3}$. They got a one-parameter family of isometric immersions from a compact Riemann surface with a singular non-CSC extremal K$\ddot{a}$hler metric to  $\mathbb{E}^{3}$, each of whom is a Weingarten surface. In \cite{Wei22}, we proved that any non-CSC HCMU metric can be locally isometric immersed into 3-dimensional space forms. As an application, we proved that any non-CSC HCMU metric can be locally isometric immersed into complex projective space $\mathbb{C}P^{n}(n\geq 3)$ with Fubini-Study metric.\par

In this manuscript, we consider the following question: Suppose $g$ is a non-CSC HCMU metric on a compact Riemann surface $M$; For any point $P\in M$, whether or not there exist an open neighborhood $U$ of $P$ and an isometric minimal immersion $F:U \rightarrow \mathbb{Q}^{3}_{c}$, where $\mathbb{Q}^{3}_{c}$ denotes the 3-dimensional space form with section curvature $c$.
The following theorem is our main result.
\begin{theorem}\label{mainth}
Let $g$ be a non-CSC HCMU metric on a compact Riemann surface $M$ with  the character 1-form $\omega$.  Denote $M^{*}=M\setminus\{ \text{zeros and poles of} ~\omega\}$.
Then for any point $P\in M^{*}$, and any open neighborhood $ U\subseteq M^{*}$ of $P$, there doesn't exist an isometric  minimal immersion $F:U\rightarrow \mathbb{Q}^{3}_{c}$.
\end{theorem}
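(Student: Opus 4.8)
The plan is to reduce both the intrinsic HCMU structure and the extrinsic minimality condition to two scalar relations for the Gauss curvature $K$ along the one-parameter family of level curves, and then to show these relations are incompatible by comparing the leading coefficients of a polynomial identity in $K$. First I would exploit the Killing symmetry inherent to a non-CSC HCMU metric. On $M^{*}$ we have $dK\neq 0$, and the umbilical-Hessian condition $\mathrm{Hess}(K)=\tfrac{1}{2}(\Delta_{g}K)\,g$ forces $J\nabla K$ to be a Killing field. Taking $K$ itself and the flow parameter $\theta$ of $J\nabla K$ as local coordinates, the metric becomes
\[
g=\frac{1}{f(K)}\,dK^{2}+f(K)\,d\theta^{2},\qquad f(K)=|\nabla K|_{g}^{2}>0,
\]
with $f$ depending on $K$ only. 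Demanding that the Gauss curvature of this metric equal the coordinate $K$ gives $-\tfrac{1}{2}f''(K)=K$, so $f$ is the cubic $f(K)=-\tfrac{1}{3}K^{3}+aK+b$; this is precisely the first integral attached to the character $1$-form $\omega$ on $M^{*}$. I will also record the two formulas I need, both immediate from the diagonal form: $\Delta_{g}K=f'(K)$, and for any smooth $\Psi$, $\Delta_{g}\Psi(K)=f\,\Psi''(K)+f'\,\Psi'(K)$.

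Next I would treat the hypothetical immersion $F:U\to\mathbb{Q}^{3}_{c}$ by moving frames, as in \cite{Wei22}. Choosing an adapted orthonormal coframe, minimality ($H=0$) makes the second fundamental form trace-free, and the Codazzi equations then say that the Hopf quadratic differential $Q\,dz^{2}$ is holomorphic in a local isothermal coordinate $z$ with $g=e^{2w}|dz|^{2}$. The Gauss equation of $\mathbb{Q}^{3}_{c}$ gives $K=c-|Q|^{2}e^{-4w}\le c$, hence $K<c$ on an open dense subset of $U$ (here I use that $K$ is a non-constant real-analytic function, so the loci $K=c$ and $Q=0$ are nowhere dense). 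On that open set $\log(c-K)=2\log|Q|-4w$; since $\log|Q|$ is harmonic and $\Delta_{0}w=-Ke^{2w}$, dividing by $e^{2w}$ yields the clean relation
\[
\Delta_{g}\log(c-K)=4K.
\]

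Finally I would combine the two sides. Writing $\Psi(K)=\log(c-K)$ in $\Delta_{g}\Psi=f\Psi''+f'\Psi'$ and multiplying by $(c-K)^{2}$ turns $\Delta_{g}\log(c-K)=4K$ into the polynomial identity
\[
-f(K)-f'(K)\,(c-K)=4K\,(c-K)^{2},
\]
which must hold for all $K$ in an interval. Substituting the cubic $f$ and $f'(K)=-K^{2}+a$ and comparing the coefficients of $K^{3}$ gives $-\tfrac{2}{3}=4$: the left side has degree-three coefficient $-\tfrac{2}{3}$ while the right side has $+4$. This contradiction rules out any isometric minimal immersion $F:U\to\mathbb{Q}^{3}_{c}$, proving the theorem.

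I expect the main obstacle to be the bookkeeping that produces the two scalar identities cleanly — in particular, justifying $\Delta_{g}\log(c-K)=4K$ on a genuine open set (controlling the zeros of the Hopf differential and the set $K=c$ by real-analyticity) and pinning down the exact normalization $f''(K)=-2K$ that fixes the leading coefficient $-\tfrac{1}{3}$ of $f$. Once both relations are established with their correct constants, the contradiction is forced already at the top-degree term, so no delicate estimate is needed; the sign discrepancy between the negative leading coefficient on the intrinsic side and the positive $4K^{3}$ coming from minimality is what drives the result.
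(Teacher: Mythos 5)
Your argument is correct, and it reaches exactly the same final obstruction as the paper, but by a genuinely different intermediate route. The paper works throughout with explicit moving frames: it writes $\omega^1=\frac{\omega+\overline{\omega}}{2}\mu$, $\omega^2=\frac{\omega-\overline{\omega}}{2\sqrt{-1}}\mu$, solves the Gauss and Codazzi equations for the coefficients of the normal connection forms (producing a complex function $h$ with $K=c-4|h|^2/\mu^2$, $b=-\frac{\mu'\mu h}{4}$, and an explicit $a$), and then extracts the contradiction from the integrability condition $d^2h=0$, which yields $4\mu''\mu(K-c)^2+4(\mu')^2(K-c)^2+2\mu'\mu(K-c)-\mu^2=0$. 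Setting $f=\mu^2/4$ (so $f''=-2K$), this is precisely your identity $-f-f'(K)(c-K)=4K(c-K)^2$, and both proofs kill it by the degree-three coefficient. What your route buys is conceptual economy: the holomorphicity of the Hopf differential packages the Codazzi equation once and for all, and the relation $\Delta_g\log(c-K)=4K$ together with $\Delta_g\Psi(K)=f\Psi''+f'\Psi'$ replaces the paper's computation of $a$, $b$, $da$, $db$. What the paper's route buys is that its Lemma 3.2 data (the explicit $h$, $a$, $b$) is set up so that the CMC case (their Theorem 1.2) follows ``in a similar way,'' whereas you would need to redo your Gauss-equation step with $c+H^2$ in place of $c$ and re-examine the resulting cubic identity. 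Two small points you should make explicit: the case $Q\equiv 0$ must be excluded (it forces $K\equiv c$, contradicting non-CSC, so no real-analyticity argument is actually needed --- $Q$ is holomorphic, hence has isolated zeros), and the polynomial identity holds identically in $K$ because $dK\neq 0$ on $M^*$, so $K$ sweeps out an interval on any open subset of $U$.
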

Furthermore, we can prove the following theorem in a similar way.
\begin{theorem}\label{mainth1}
Let $g$ be a non-CSC HCMU metric on a compact Riemann surface $M$ with  the character 1-form $\omega$. Denote $M^{*}=M\setminus\{ \text{zeros and poles of} ~\omega\}$.
Then for any point $P\in M^{*}$, and any open neighborhood $ U\subseteq M^{*}$ of $P$, there doesn't exist an isometric immersion $F:U \rightarrow \mathbb{Q}^{3}_{c}$ of constant mean curvature.
\end{theorem}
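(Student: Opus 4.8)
\section*{Proof proposal for Theorem \ref{mainth1}}

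The plan is to play the rigid intrinsic structure of a non-CSC HCMU metric against the Gauss--Codazzi equations of a CMC surface and show they are incompatible; since a minimal immersion is the case $H=0$, the same computation will contain Theorem \ref{mainth}. First I would fix the adapted moving frame. On $M^{*}$ the gradient $\nabla K$ of the Gaussian curvature is nonvanishing, so I take the orthonormal coframe $\{\omega^{1},\omega^{2}\}$ with $e_{1}$ parallel to $\nabla K$ and $e_{2}=Je_{1}$ tangent to the orbits of the Killing field $J\nabla K$. The HCMU (umbilical--Hessian) condition then forces the connection form into the special shape $\omega_{12}=-p\,\omega^{2}$ and makes $K$, $a:=|\nabla K|$ and $p:=e_{1}(a)/a$ functions of the transverse variable $s$ alone. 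Feeding this into $d\omega_{12}=-K\,\omega^{1}\wedge\omega^{2}$ and integrating yields the first integral $a^{2}=P(K)$, where $P$ is a cubic polynomial whose leading coefficient is a fixed nonzero constant; here the non-CSC hypothesis is exactly what guarantees that $P$ is a \emph{genuine} cubic. I also record the commutation relation $[e_{1},e_{2}]=p\,e_{2}$.

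Next I would assume for contradiction that a CMC isometric immersion $F:U\to\mathbb{Q}^{3}_{c}$ exists and write its second fundamental form in this frame as $h_{11}=H+\mu$, $h_{22}=H-\mu$, $h_{12}=\tau$ with $H$ constant. The Gauss equation $K=c+\det(\mathrm{II})$ becomes the pointwise constraint $\mu^{2}+\tau^{2}=H^{2}+c-K=:\rho^{2}$, whose right-hand side again depends only on $s$. On the open set where $\rho^{2}>0$ I set $\psi:=\mu+i\tau=\rho\,e^{i\theta}$ (if instead $\rho^{2}<0$ somewhere, the Gauss equation already fails and there is nothing to prove). Writing out the two Codazzi equations $h_{11;2}=h_{12;1}$ and $h_{12;2}=h_{22;1}$ and substituting $\omega_{12}=-p\,\omega^{2}$ collapses them into the single complex equation $(e_{1}-ie_{2})\psi=2p\,\psi$.

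I would then insert the polar form $\psi=\rho e^{i\theta}$. Since $e_{2}(\rho)=0$, separating real and imaginary parts gives $e_{1}(\theta)=0$ together with $e_{2}(\theta)=2p-\rho'/\rho=:F$, a function of $s$ only. Applying the commutator identity $[e_{1},e_{2}]\theta=p\,e_{2}(\theta)$ to this pair forces the ordinary differential equation $F'=pF$, i.e. $F=C\,a$ for a constant $C$. Substituting $F=2p+a/(2\rho^{2})$, $p=P'(K)/(2a)$ and $\rho^{2}=H^{2}+c-K$ converts $F=Ca$ into the polynomial identity $2(H^{2}+c-K)\,P'(K)=\big(2C(H^{2}+c-K)-1\big)P(K)$ in the single variable $K$, which must hold identically as $K$ sweeps out an interval.

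The crux, and the step I expect to carry the whole argument, is the degree count for this last identity. If $C\neq0$ the right-hand side has degree $\deg P+1=4$ with top coefficient a nonzero multiple of $C$ times the leading coefficient of $P$, whereas the left-hand side has degree $3$; hence $C=0$. Matching the top coefficient of the resulting degree-three identity $2(H^{2}+c-K)P'+P=0$ then forces the leading coefficient of $P$ to vanish, contradicting the fact that $P$ is a genuine cubic, which is precisely the non-CSC hypothesis. This yields Theorem \ref{mainth1}, and the specialization $H=0$ (with the constraint $\mu^{2}+\tau^{2}=c-K$) recovers Theorem \ref{mainth}. The only points needing care are the bookkeeping of the universal constants in $P$, which do not affect the degree argument, and the locus $\rho^{2}=0$, handled by restricting to the open dense set of $M^{*}$ where $\nabla K\neq0$ and $\rho^{2}\neq0$.
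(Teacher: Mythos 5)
Your argument is correct in substance and reaches the right contradiction; note also that the paper only writes out the minimal case (Theorem \ref{mainth}) in detail and merely asserts that the CMC case is ``similar'', so your write-up actually supplies the general argument. Both routes rest on the same obstruction: Gauss and Codazzi overdetermine the second fundamental form, and the compatibility condition becomes a polynomial identity in $K$ that is incompatible with the cubic first integral $|\nabla K|^{2}=P(K)$ of a non-CSC HCMU metric. The bookkeeping differs: the paper works in the complex gauge of the character 1-form, writes $\omega_{2}^{3}=h\omega+\overline{h}\overline{\omega}$, pins down both components of $dh=a\omega+b\overline{\omega}$ from the Gauss equation, the Codazzi equation and the differentiated Gauss equation, and extracts the obstruction from $d^{2}h=0$; you instead use the real frame adapted to $\nabla K$, use that for constant $H$ the Codazzi equations make the Hopf differential holomorphic, and extract the obstruction from the commutator identity applied to the phase $\theta$. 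Your version is the cleaner generalization, since the trace-free splitting $h_{11}=H+\mu$, $h_{22}=H-\mu$, $h_{12}=\tau$ lets $H$ enter only through $\rho^{2}=H^{2}+c-K$. Two sign slips, neither fatal: with $\omega_{12}=-p\,\omega^{2}$ and $p=e_{1}(a)/a$ one gets $[e_{1},e_{2}]=-p\,e_{2}$, and the Codazzi system collapses to $(e_{1}-ie_{2})\psi=-2p\,\psi$ (this is the version that reproduces the paper's $b=-\tfrac{\mu'\mu}{4}h$ when $H=0$). Carrying the corrected signs through gives $F=C/a$ and the identity $-2(H^{2}+c-K)P'(K)+P(K)=2C(H^{2}+c-K)$, whose left side is a genuine cubic with top coefficient $7$ times the nonzero leading coefficient of $P$ while the right side is linear; so the contradiction survives and in fact becomes a one-step degree count rather than the two-step elimination of $C$ you describe. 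Your handling of the locus $\rho^{2}\le 0$ is adequate: $\rho^{2}<0$ contradicts the Gauss equation outright, and $\rho^{2}\equiv 0$ on an open set forces $K$ constant, contradicting $\nabla K\neq 0$ on $M^{*}$.
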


\section{Preliminaries}
\subsection{Non-CSC HCMU metric}
\begin{definition}[\cite{T}]\label{conedef}
 Let $M$ be a Riemann surface, $P\in M$. A conformal metric $g$ on $M$ is said to have a conical singularity at
 $P$ with the singular angle $2 \pi \alpha(\alpha>0,\alpha\neq 1)$ if in a neighborhood of $P$
 \begin{equation}\label{ds^2}
  g=e^{2\varphi}|dz|^2,
 \end{equation}
 where $z$ is a local complex coordinate defined in the neighborhood of $P$ with $z(P)=0$ and
 \begin{equation*}
  \varphi-(\alpha-1)\ln|z|
 \end{equation*}
 is continuous at $0$.
\end{definition}
\begin{definition}[\cite{Xb}]\label{cuspdef}
 Let $M$ be a Riemann surface, $P\in M$. A conformal metric $g$ on $M$ is said to have a cusp singularity at
 $P$ if in a neighborhood of $P$
 \begin{equation}\label{cusp-ds^2}
  g=e^{2\varphi}|dz|^2,
 \end{equation}
 where $z$ is a local complex coordinate defined in the neighborhood of $P$ with $z(P)=0$ and
 \begin{equation*}
 \lim_{z\rightarrow 0} \frac{\varphi+\ln|z|}{\ln|z|}=0.
 \end{equation*}
\end{definition}
\begin{definition}[\cite{Ch2}]
 Let $M$ be a compact Riemann surface and $P_1,\cdots,P_N$ be $N$ points on $M$.
 Denote $M\backslash \{P_1,\ldots,P_N\}$ by $M^*$. Let $g$ be a conformal metric on $M^*$.
 If $g$ satisfies
\begin{equation}\label{HCMUequ}
  K_{,zz}=0,
\end{equation}
where $K$ is the  Gauss curvature of $g$, we call $g$ an HCMU metric on $M$.
\end{definition}
  In this paper, we always consider non-CSC HCMU metrics with finite area and
  finite Calabi energy, that is,
\begin{equation}\label{finite}
 \int_{M^*}dg<+\infty, ~~ \int_{M^*}
K^2 dg <+\infty.
\end{equation}
\par
From \cite{Ch0}, \cite{LZ}, \cite{WZ},  we know that each singularity of a non-CSC HCMU metric  is  conical or cusp if it has finite area and finite Calabi energy.\par
We now list some results of non-CSC HCMU metrics, which will be used in this paper. For more results one can refer to \cite{CCW},\cite{Xb} and the references cited in it.\par
First the equation (\ref{HCMUequ}) is equivalent to
\begin{equation*}
 \nabla K =\sqrt{-1}e^{-2\varphi}K_{\bar{z}}\frac{\partial}{\partial z},
\end{equation*}
which is a holomorphic vector field on $M^*$. In \cite{Ch2},\cite{LZ}, the authors proved that the curvature
$K$ can be continuously extended to $M$ and there are finite smooth extremal points of $K$ on
$M^*$. In \cite{CCW},\cite{Xb}, the authors proved the following fact: each smooth extremal
point of $K$ is either the maximum point of $K$ or the minimum point of $K$, and if we denote the maximum
of $K$ by $K_1$ and the minimum of $K$ by $K_2$ then if all the singularities of $g$ are conical singularities,
$$
 K_1>0,~K_1>K_2>-(K_1+K_2);
$$
if there exist cusps in the singularities,
$$K_{1}>0,~K_{2}=-\frac{1}{2}K_{1}.$$
In \cite{LZ}, C.S.Lin and X.H.Zhu proved that $\nabla K$ is actually a meromorphic vector field on $M$.
In \cite{CW2}, Q.Chen and the second author defined the dual 1-form of $\nabla K$ by $\omega(\nabla K)=\frac{\sqrt{-1}}{4}$.
They call $\omega$ the character 1-form of the metric. Denote $M^* \setminus \{\text{smooth extremal
points of}~K \}$ by $M'$. Then on $M'$
\begin{equation}\label{sys0}
\begin{cases}
 \cfrac{dK}{-\frac{1}{3}(K-K_1)(K-K_2)(K+K_1+K_2)}=\omega+\bar{\omega}, \\
g=-\frac{4}{3}(K-K_1)(K-K_2)(K+K_1+K_2)\omega \bar{\omega}.\\
\end{cases}
\end{equation}
By (\ref{sys0}), some properties of $\omega$ are got in \cite{CW2}:
\begin{itemize}
 \item $\omega$ only has simple poles,
 \item at each pole, the residue of $\omega$ is a non-zero real number,
  \item $\omega+\bar{\omega}$ is exact on $M \setminus \{poles~of~\omega\}$.
 \end{itemize}
Conversely, if a meromorphic 1-form $\omega$ on $M$ which satisfies the properties above, then we pick two real numbers $K_{1},K_{2}$ such that $K_{1}>0,K_1>K_2>-(K_1+K_2)$ or $K_{1}>0,~K_{2}=-\frac{1}{2}K_{1},$  and consider the following equation on $M\setminus\{\text{poles of}~\omega\}$
\begin{equation}\label{sys1}
\begin{cases}
 \cfrac{dK}{-\frac{1}{3}(K-K_1)(K-K_2)(K+K_1+K_2)}=\omega+\bar{\omega}, \\
K(P_{0})=K_{0},\\
\end{cases}
\end{equation}
where $P_{0}\in M\setminus\{\text{poles of}~\omega\}$ and $K_{2}< K_{0}< K_{1}$. We get that (\ref{sys1}) has a unique solution $K$ on $M\setminus\{\text{poles of}~\omega\}$ and $K$ can be continuously extended to $M$. Furthermore, we define a metric $g$ on $M\setminus\{\text{poles of}~\omega\}$ by
$$g=-\frac{4}{3}(K-K_{1})(K-K_{2})(K+K_{1}+K_{2})\omega\overline{\omega},$$
 where $K$ is the solution of (\ref{sys1}). Then it can be proved that $g$ is a non-CSC HCMU metric, $K$ is the Gauss curvature of $g$ and $\omega$ is the character 1-form of $g$.\par
It is interesting that on any compact Riemann surface there always exists a meromorphic 1-form  satisfying the properties(see \cite{CCW}). So there always exists a non-CSC HCMU metric on a compact Riemann surface.

\subsection{Riemannian submanifolds}
In this section, we recall some  facts of Riemannian submanifolds. For more results, one may consult \cite{MD} and references cited in it.\par
Let $F:M^{n}\rightarrow \overline{M}^{n+p}$ be an immersion of a smooth manifold $M$ of dimension $n$ into a smooth manifold $\overline{M}$ of dimension $n+p$. The number $p$ is called the codimension of $F$. If $<,>_{\overline{M}}$ is a Riemannian metric on $\overline{M}$, for every point $P\in M$ and any $X,Y\in T_{P}M$, define $<X,Y>_{M}=<F_{*}X,F_{*}Y>_{\overline{M}}$. Then $<,>_{M}$ is a Riemannian metric on $M$. In this case, $F$ becomes an isometric immersion of $M$ into $\overline{M}$. We will often drop the subscript and denote a Riemannian metric simply by $<,>$, assuming that the underlying manifold will be clear from the context.\par

Let $F:M^{n}\rightarrow \overline{M}^{n+p}$ be an isomeric immersion. Since $F$ is an immersion, then, for each point $P\in M$, there exists a neighborhood $U\subseteq M$ of $P$ such that $F:U\rightarrow \overline{M}$ is an imbedding. Therefore, we may identity $U$ with $F(U)$. Hence, the tangent space of $M$ at $P$ is a subspace of $\overline{M}$ at $P$. Then we have
\begin{equation}\label{De-E}
T_{P}\overline{M}=T_{P}M\oplus T^{\perp}_{P}M,
\end{equation}
where $T^{\perp}_{P}M$ is the orthogonal complement of $T_{P}M$ in $T_{P}\overline{M}$. In this way, we obtain a vector bundle
$$T^{\perp}M=\bigcup_{P\in M}T_{P}^{\perp}M,$$
which is called the normal bundle of $M$.\par

Let $\nabla,\overline{\nabla}$ be the Levi-Civita connections of $M,\overline{M}$, respectively.  Denote the sets of smooth vector fields and smooth normal vector fields on $M$ by $\chi(M),\chi^{\bot}(M)$, respectively. Then for any two smooth vector fields $X,Y\in\chi(M)$, by (\ref{De-E}), we obtain the Gauss formula
$$\overline{\nabla}_{X}Y=\nabla_{X}Y+B(X,Y),$$
where $B:TM\times TM\rightarrow T^{\perp}M$ is called the second fundamental form of $F$.\par
Similarly, for any $X\in\chi(M),\xi\in\chi^{\bot}(M)$, by (\ref{De-E}), we obtain the Weingarten formula
$$\overline{\nabla}_{X}\xi=-A_{\xi}X+\nabla^{\bot}_{X}\xi,$$
where $A_{\xi}:TM\rightarrow TM$ is called the shape operator of $f$ with respect to $\xi$, and $\nabla^{\perp}$ is called the normal connection of $F$. By the Gauss and Weingaren formulas, $B$ and $A_{\xi}$ satisfy
\begin{equation}\label{A-B}
<A_{\xi}X,Y>=<B(X,Y),\xi>.
\end{equation}

If the codimension $p = 1$, we call the isometric immersion $F:M^{n}\rightarrow \overline{M}^{n+1}$ is a hypersurface of $\overline{M}$. Let $F:M^{n}\rightarrow \overline{M}^{n+1}$ be an orientable hypersurface. Choosing a
local smooth unit normal vector field $\xi$ along $F$ and a local smooth orthonormal  tangential frame $e_{1},\ldots,e_{n}$, then the mean curvature vector $H$ of $F$  is defined by
$$H=\frac{1}{n}\sum_{i=1}^{n}B(e_{i},e_{i}).$$
Denote $A = A_{\xi}$, then, by (\ref{A-B}),
$$H=\frac{1}{n}(\sum_{i=1}^{n}<Ae_{i},e_{i}>)\xi.$$
If $H\equiv0$, the isometric immersion $F$ is called a minimal immersion. Generally, $F$ is called a constant mean curvature immersion if $\|H\|$ is a constant.\par

\subsubsection{Basic equations}
Using the Gauss and Weingarten formulas, the basic equations of isometric immersion $F:M^{n}\rightarrow \overline{M}^{n+p}$ can be written  as follows.
\begin{equation*}
\begin{aligned}
&\textbf{Gauss-equation}\\
R(X,Y,Z,W)=\overline{R}(X,Y,Z,W)+&<B(X,Z),B(Y,W)>-<B(X,W),B(Y,Z)>;
\end{aligned}
\end{equation*}
\begin{equation*}
\begin{aligned}
&\textbf{Codazzi-equation}\\
(\overline{R}(X,Y)Z)^{\perp}=&(\nabla^{\perp}_{X}B)(Y,Z)-(\nabla^{\perp}_{Y}B)(X,Z);
\end{aligned}
\end{equation*}
\begin{equation*}
\begin{aligned}
&\textbf{Ricci-equation}\\
(\overline{R}(X,Y)\xi)^{\perp}=R^{\perp}(&X,Y)\xi+B(A_{\xi}X,Y)-B(X,A_{\xi}Y),
\end{aligned}
\end{equation*}
where $X,Y,Z,W\in \chi(M),\xi\in \chi^{\perp}(M)$, $R^{\perp}$ denotes the curvature tensor of the normal bundle $T^{\perp}M$ and $R,\overline{R}$ are Riemannian curvature tensors of $M,\overline{M}$, respectively.\par
In particular, if $\overline{K}(X,Y)=\overline{R}(X,Y,X,Y)$ and $K(X,Y)=R(X,Y,X,Y)$ denote the sectional curvatures in $\overline{M}$ and $M$ of the plane generated by the orthonormal vectors $X,Y\in T_{P}M$, the Gauss-equation becomes
\begin{equation*}
K(X,Y)=\overline{K}(X,Y)+<B(X,X),B(Y,Y)>-<B(X,Y),B(X,Y)>.
\end{equation*}

In the case of a hypersurface $F:M^{n}\rightarrow \overline{M}^{n+1}$, the Gauss-equation can be written as
$$R(X,Y,Z,W)=\overline{R}(X,Y,Z,W)-<AX,W><AY,Z>+<AX,Z><AY,W>.$$
The Codazzi-equation becomes
$$(\overline{R}(X,Y)\xi)^{T}=(\nabla_{Y}A)(X)-(\nabla_{X}A)Y,$$
where
$$(\nabla_{Y}A)X=\nabla_{Y}AX-A\nabla_{Y}X.$$

Moreover, if $\overline{M}^{n+1}$ has constant section curvature c, then the basic equations reduce, respectively, to\par
\textbf{Gauss-equation}\\
$$R(X,Y)Z=c(X\wedge Y)Z+(AX\wedge AY)Z,$$
where $(X\wedge Y)Z=<Y,Z>X-<X,Z>Y$.\par

\textbf{Codazzi-equation}\\
$$(\nabla_{Y}A)X=(\nabla_{X}A)Y.$$
\begin{remark}
In the case of hypersurfaces, the Ricci-equation is identity.
\end{remark}
We now, using moving frames, give the basic equations of the hypersurface $F:M^{n}\rightarrow \overline{M}^{n+1}$. We will make use of the following convention on the ranges of indices:
$$1\leq A,B,C,\ldots \leq n+1,$$
$$1\leq i,j,k,\ldots\leq n,$$
and we shall agree that repeated indices are summed over the respective.\par
Let $e_{1},\ldots,e_{n},e_{n+1}$ be a  local orthonormal frame of $\overline{M}$, such that $e_{1},\ldots,e_{n}$ are tangential to $M$, then $e_{n+1}$ is perpendicular to $M$. Let $\theta^{1},\ldots,\theta^{n},\theta^{n+1}$ be its dual coframe. Then the structure equations of $\overline{M}$ can be written as follows:
\begin{equation*}
\begin{cases}
d\theta^{A}=-\theta^{A}_{B}\wedge \theta^{B},\theta^{A}_{B}+\theta^{B}_{A}=0,\\
d\theta^{A}_{B}=-\theta^{A}_{C}\wedge\theta^{C}_{B}+\Phi^{A}_{B},\Phi^{A}_{B}=\frac{1}{2}\overline{R}^{A}_{BCD}\theta^{C}\wedge\theta^{D},
\end{cases}
\end{equation*}
where $\theta^{A}_{B}$ and $\Phi^{A}_{B}$ are connection forms and curvature forms of $\overline{M}$.\par
Set
$$F^{*}\theta^{A}=\omega^{A},F^{*}\theta^{A}_{B}=\omega^{A}_{B},$$
then the structure equations of $M$ are
\begin{equation*}
\begin{cases}
d\omega^{i}=-\omega^{i}_{j}\wedge \omega^{j},\omega^{i}_{j}+\omega^{j}_{i}=0,\\
d\omega^{i}_{j}=-\omega^{i}_{k}\wedge\omega^{k}_{j}+\Omega^{i}_{j},\Omega^{i}_{j}=\frac{1}{2}R^{i}_{jkl}\omega^{k}\wedge\omega^{l}.
\end{cases}
\end{equation*}
The basic equations are
\begin{equation*}
(\textbf{Gauss-equation})~~R^{i}_{jkl}=\overline{R}^{i}_{jkl}+(h^{n+1}_{ik}h^{n+1}_{jl}-h^{n+1}_{il}h^{n+1}_{jk}),
\end{equation*}
\begin{equation*}
(\textbf{Codazzi-equation})~~\overline{R}^{n+1}_{ijk}=h^{n+1}_{ikj}-h^{n+1}_{ijk},
\end{equation*}
where $\omega^{n+1}_{i}=h^{n+1}_{ij}\omega^{j},h^{n+1}_{ij}=h^{n+1}_{ji},h^{n+1}_{ijk}\omega^{k}=dh^{n+1}_{ij}-h^{n+1}_{ik}\omega^{k}_{j}-h^{n+1}_{kj}\omega^{k}_{i}.$ In fact, by (\ref{A-B}), we have
\begin{equation*}
\begin{aligned}
A(e_{i})=\sum_{j=1}^{n}h^{n+1}_{ij}e_{j}.
\end{aligned}
\end{equation*}
If the section curvature of $\overline{M}$ is a constant $c$, then the basic equations become
\begin{equation}\label{G-C-E}
\begin{cases}
R_{ijkl}=c(\delta_{ik}\delta_{jl}-\delta_{il}\delta_{jk})+h_{ik}h_{jl}-h_{il}h_{jk}~(\textbf{Gauss-equation}),\\
h_{ikj}=h_{ijk}~(\textbf{Codazzi-equation}),
\end{cases}
\end{equation}
where $h_{ij}=h^{n+1}_{ij}.$

\subsubsection{The Fundamental Theorem  of Hypersurfaces}
From now on, let $\overline{M}^{n+1}=\mathbb{Q}_{c}^{n+1}$, where $\mathbb{Q}_{c}^{n+1}$ denotes $(n+1)$-dimension space form with constant sectional curvature $c$. Then  the fundamental theorem of hypersurfaces can be written as follows.
\begin{theorem}[\cite{MD}]\label{F-T-H}
Let $M^{n}$ be a simply connected Riemannian manifold,  and let $A$ be a symmetric section of $End(TM)$ satisfying the Gauss and Codazzi equations. Then there exist an isometric immersion $F:M^{n}\rightarrow \mathbb{Q}^{n+1}_{c}$ and a unit normal vector field $\xi$ such that $A$ coincides with the shape operator $A_{\xi}$ of $F$ with respect to $\xi$.
\end{theorem}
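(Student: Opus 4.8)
The plan is to treat this as an integrability problem: the Gauss and Codazzi equations are exactly the conditions under which the prescribed infinitesimal data $(M^{n},\langle\,,\,\rangle,A)$ can be ``developed'' into $\mathbb{Q}^{n+1}_{c}$, while the simple connectivity of $M^{n}$ is what upgrades a local development to a globally defined immersion. First I would realize the space form inside a flat model: for $c\neq 0$ set $V=\mathbb{R}^{n+2}$ with the standard inner product of signature $(n+2,0)$ when $c>0$ and $(n+1,1)$ when $c<0$, so that $\mathbb{Q}^{n+1}_{c}=\{x\in V:\langle x,x\rangle=1/c\}$; for $c=0$ take $V=\mathbb{R}^{n+1}$. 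If $F:M^{n}\to\mathbb{Q}^{n+1}_{c}\subset V$ were an isometric immersion with unit normal $\xi$, then with respect to the flat connection $D$ of $V$ one would have the Gauss--Weingarten relations $D_{X}Y=\nabla_{X}Y+\langle AX,Y\rangle\,\xi-c\langle X,Y\rangle\,F$, $D_{X}\xi=-AX$, and (for $c\neq 0$) $D_{X}F=X$. These relations are the blueprint for the construction.

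Next I would build an abstract bundle carrying this blueprint. Let $\widehat{E}=TM\oplus\varepsilon$, where $\varepsilon$ is spanned by a formal global section $\xi$ when $c=0$, and by formal global sections $\xi,\eta$ when $c\neq 0$. Equip $\widehat{E}$ with the bundle metric for which $TM$ retains $\langle\,,\,\rangle$, the section $\xi$ is a unit vector orthogonal to $TM$, and (for $c\neq 0$) $\eta$ is orthogonal to $TM$ and to $\xi$ with $\langle\eta,\eta\rangle=1/c$; this metric has the same signature as $V$. I then define a metric connection $\widehat{\nabla}$ on $\widehat{E}$ by $\widehat{\nabla}_{X}Y=\nabla_{X}Y+\langle AX,Y\rangle\,\xi-c\langle X,Y\rangle\,\eta$, $\widehat{\nabla}_{X}\xi=-AX$, and $\widehat{\nabla}_{X}\eta=X$, where $\nabla$ is the Levi--Civita connection of $M$ and $A$ is the given symmetric endomorphism.

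I would then compute the curvature $\widehat{R}$ of $\widehat{\nabla}$ on the generators $e_{i},\xi,\eta$. A direct calculation shows that the $TM$-component of $\widehat{R}(X,Y)Z$ for $Z\in TM$ equals $R(X,Y)Z-(AX\wedge AY)Z-c(X\wedge Y)Z$, which vanishes precisely when the Gauss equation holds, while its $\xi$-component equals $(\nabla_{X}A)Y-(\nabla_{Y}A)X$, which vanishes precisely when the Codazzi equation holds; the $\eta$-components, together with $\widehat{R}(X,Y)\xi$ and $\widehat{R}(X,Y)\eta$, are then forced to vanish using only the symmetry of $A$ and the torsion-freeness of $\nabla$. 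Hence the hypotheses on $A$ are equivalent to the flatness of $\widehat{\nabla}$.

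Finally I would integrate. Since $M^{n}$ is simply connected and $\widehat{\nabla}$ is a flat metric connection, $\widehat{E}$ admits a global $\widehat{\nabla}$-parallel orthonormal frame, equivalently a bundle isometry $\Psi:\widehat{E}\to M\times V$ intertwining $\widehat{\nabla}$ with the trivial connection $d$. For $c\neq 0$ I set $F=\Psi(\eta):M\to V$; parallelism makes $\langle F,F\rangle\equiv 1/c$, so $F$ maps into $\mathbb{Q}^{n+1}_{c}$, and $\widehat{\nabla}_{X}\eta=X$ translates into $dF(X)=\Psi(X)$, showing $F$ is an isometric immersion whose unit normal is $\Psi(\xi)$; the relation $\widehat{\nabla}_{X}\xi=-AX$ then identifies the shape operator with $A$. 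For $c=0$ the same trivialization makes the $V$-valued $1$-form $X\mapsto\Psi(X)$ closed, and the Poincar\'e lemma on the simply connected $M^{n}$ yields $F$ with $dF(X)=\Psi(X)$, the remaining assertions following as before. The main obstacle is precisely this last passage from infinitesimal to global data: flatness alone gives only local parallel frames, and one must invoke simple connectivity to rule out monodromy and obtain a single-valued $\Psi$, hence a globally defined immersion realizing $A$ as its shape operator.
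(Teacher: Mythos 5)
The paper does not prove this statement: it is quoted verbatim as the Fundamental Theorem of Hypersurfaces from the cited reference \cite{MD} and used as a black box, so there is no internal proof to compare against. Your proposal is the standard (and correct) argument for it --- and in fact essentially the one in the cited source: encode the Gauss--Weingarten blueprint in the auxiliary bundle $\widehat{E}=TM\oplus\varepsilon$, check that flatness of $\widehat{\nabla}$ is equivalent to the Gauss and Codazzi equations, and use simple connectivity to obtain a global parallel trivialization, from which $F$ is read off as $\Psi(\eta)$ (or, for $c=0$, as a primitive of the closed $V$-valued $1$-form $X\mapsto\Psi(X)$). Two small points to tighten: the tangential component of $\widehat{R}(X,Y)\xi$ is $-\bigl((\nabla_{X}A)Y-(\nabla_{Y}A)X\bigr)$, so its vanishing is not a consequence of the symmetry of $A$ alone --- it is Codazzi again, or, if you prefer, it follows from the already-established vanishing of the $\xi$-component of $\widehat{R}(X,Y)Z$ via metric-compatibility of $\widehat{\nabla}$; and for $c<0$ the quadric $\langle x,x\rangle=1/c$ has two sheets, so you should invoke connectedness of $M$ to conclude that $F$ lands in a single sheet, which is the hyperbolic space. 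Neither point affects the validity of the argument.
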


\section{Proof of Theorem \ref{mainth}}
\subsection{Reduce the existence of the isometric minimal immersion $F:U\rightarrow \mathbb{Q}^{3}_{c}$ to the existence of some kind of  1-forms}
By the theorem \ref{F-T-H} and the basic equations (\ref{G-C-E}), one can easily prove the following lemma.
\begin{lemma}\label{L-1}
Let $M$ be a simply connected Riemann surface. Let $g=(\omega^{1})^{2}+(\omega^{2})^{2}$ be a Riemannian metric of $M$, and $\omega^{2}_{1}$ be the connection form of $g$, then there exists an isometric minimal immersion $F:M\rightarrow \mathbb{Q}^{3}_{c}$ if and only if there exist two 1-forms
\begin{equation*}
\begin{cases}
\omega_{1}^{3}=h_{11}\omega^{1}+h_{12}\omega^{2},\\
\omega_{2}^{3}=h_{21}\omega^{1}+h_{22}\omega^{2},
\end{cases}
\end{equation*}
which satisfy
\begin{equation*}
\begin{cases}
h_{11}=-h_{22},\\
h_{12}=h_{21},
\end{cases}
\end{equation*}
and
\begin{equation*}
\begin{cases}
d\omega_{1}^{2}=-\omega_{1}^{3}\wedge \omega_{2}^{3}-c\omega^{1}\wedge\omega^{2}~(\textbf{Gauss-equation}),\\
d\omega_{1}^{3}=\omega_{1}^{2}\wedge \omega_{2}^{3}~(\textbf{Codazzi-equation}),\\
d\omega_{2}^{3}=-\omega_{1}^{2}\wedge \omega_{1}^{3}~(\textbf{Codazzi-equation}).
\end{cases}
\end{equation*}
\end{lemma}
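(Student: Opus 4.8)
The plan is to translate the statement into the language of the Fundamental Theorem of Hypersurfaces (Theorem \ref{F-T-H}) and the moving-frame Gauss--Codazzi equations (\ref{G-C-E}), so that the three differential-form conditions become exactly the hypotheses of that theorem. Since $g=(\omega^{1})^{2}+(\omega^{2})^{2}$, the pair $\omega^{1},\omega^{2}$ is an orthonormal coframe on $M$ and the connection form $\omega^{2}_{1}$ is characterized by $d\omega^{1}=\omega^{2}_{1}\wedge\omega^{2}$, $d\omega^{2}=-\omega^{2}_{1}\wedge\omega^{1}$, together with the surface structure equation $d\omega^{2}_{1}=-K\,\omega^{1}\wedge\omega^{2}$, where $K$ is the Gauss curvature of $g$.

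For the necessity direction I would start from a given isometric minimal immersion $F$ and choose a local orthonormal frame $e_{1},e_{2},e_{3}$ of $\mathbb{Q}^{3}_{c}$ adapted to $F$, with $e_{1},e_{2}$ tangent to $M$ and $e_{3}$ normal. Pulling back the ambient structure equations and writing $\omega^{3}_{i}=F^{*}\theta^{3}_{i}$, the relation $\omega^{3}_{i}=h_{ij}\omega^{j}$ of the excerpt defines the $h_{ij}$; these are exactly the components of the shape operator $A$ by (\ref{A-B}), so symmetry of the second fundamental form gives $h_{12}=h_{21}$, and minimality $H=0$ gives $h_{11}+h_{22}=0$. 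The three remaining identities are read off the pulled-back structure equations $d\omega^{A}_{B}=-\omega^{A}_{C}\wedge\omega^{C}_{B}+\Phi^{A}_{B}$, after computing the curvature forms of the space form: for $\overline{M}=\mathbb{Q}^{3}_{c}$ one finds $\Phi^{2}_{1}=-c\,\theta^{1}\wedge\theta^{2}$, while $\Phi^{3}_{1}$ and $\Phi^{3}_{2}$ pull back to zero because $\omega^{3}=F^{*}\theta^{3}=0$ on $M$. Using $\omega^{2}_{3}=-\omega^{3}_{2}$ and $\omega^{1}_{2}=-\omega^{2}_{1}$, the three equations $d\omega^{2}_{1}=-\omega^{3}_{1}\wedge\omega^{3}_{2}-c\,\omega^{1}\wedge\omega^{2}$, $d\omega^{3}_{1}=\omega^{2}_{1}\wedge\omega^{3}_{2}$, and $d\omega^{3}_{2}=-\omega^{2}_{1}\wedge\omega^{3}_{1}$ then drop out directly.

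For the sufficiency direction I would run this computation backwards. Given the two $1$-forms with $h_{11}=-h_{22}$ and $h_{12}=h_{21}$, define $A\in\mathrm{End}(TM)$ by the symmetric trace-free matrix $(h_{ij})$ in the frame dual to $\omega^{1},\omega^{2}$. The goal is to check that $A$ satisfies the index-form equations (\ref{G-C-E}). For the Gauss equation, expanding $\omega^{3}_{1}\wedge\omega^{3}_{2}=(h_{11}h_{22}-h_{12}h_{21})\,\omega^{1}\wedge\omega^{2}=\det(A)\,\omega^{1}\wedge\omega^{2}$ and comparing with $d\omega^{2}_{1}=-K\,\omega^{1}\wedge\omega^{2}$ shows the first displayed equation is equivalent to $K=c+\det A$, which is precisely $R_{1212}=c+h_{11}h_{22}-h_{12}h_{21}$. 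For Codazzi, the two form equations $d\omega^{3}_{i}=\pm\,\omega^{2}_{1}\wedge\omega^{3}_{j}$ unwind, upon substituting $\omega^{3}_{i}=h_{ij}\omega^{j}$ and using the structure equations for $d\omega^{j}$ and $\omega^{2}_{1}$, into the symmetry $h_{ijk}=h_{ikj}$, where $h_{ijk}\omega^{k}=dh_{ij}-h_{ik}\omega^{k}_{j}-h_{kj}\omega^{k}_{i}$. Since $M$ is simply connected and $A$ is a symmetric section of $\mathrm{End}(TM)$ satisfying these equations, Theorem \ref{F-T-H} furnishes an isometric immersion $F:M\to\mathbb{Q}^{3}_{c}$ with shape operator $A$; minimality then follows from $\mathrm{tr}\,A=h_{11}+h_{22}=0$, so that $H=\tfrac{1}{2}(\mathrm{tr}\,A)\,\xi=0$.

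The routine but error-prone part, and the step I would be most careful about, is the sign bookkeeping when matching the differential-form identities to the tensor equations (\ref{G-C-E}): fixing the orientation convention so that $d\omega^{2}_{1}=-K\,\omega^{1}\wedge\omega^{2}$, correctly evaluating $\Phi^{2}_{1}=-c\,\theta^{1}\wedge\theta^{2}$ from $\Phi^{A}_{B}=\tfrac{1}{2}\overline{R}^{A}_{BCD}\theta^{C}\wedge\theta^{D}$ with $\overline{R}^{A}_{BCD}=c(\delta_{AC}\delta_{BD}-\delta_{AD}\delta_{BC})$, and keeping track of $\omega^{A}_{B}+\omega^{B}_{A}=0$ throughout. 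There is no genuine analytic difficulty here: once the three conditions are recognized as literally the Gauss and Codazzi equations (\ref{G-C-E}) together with the minimality and symmetry constraints on $A$, the existence of $F$ is supplied entirely by the Fundamental Theorem of Hypersurfaces.
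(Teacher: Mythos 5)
Your proposal is correct and follows exactly the route the paper intends: the paper gives no detailed argument for Lemma \ref{L-1}, stating only that it follows from Theorem \ref{F-T-H} and the basic equations (\ref{G-C-E}), and your translation of the three form identities into the Gauss--Codazzi equations plus the symmetry and trace-free conditions on the shape operator, with the Fundamental Theorem of Hypersurfaces supplying the sufficiency, is precisely that argument carried out. The sign conventions you fix ($d\omega^{2}_{1}=-K\,\omega^{1}\wedge\omega^{2}$, $\omega^{3}_{1}\wedge\omega^{3}_{2}=\det(h_{ij})\,\omega^{1}\wedge\omega^{2}$) agree with those used later in the paper's proof of Lemma \ref{L-2}.
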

\subsection{Proof of Theorem \ref{mainth}}
\begin{lemma}\label{L-2}
Let $M$ be a compact Riemann surface, and $g$ be a non-CSC HCMU metric on $M$. Suppose $\omega$ and $K$ are the character 1-form  and the Gauss curvature of $g$. Suppose the maximum and the minimum of $K$ are $K_{1},K_{2}$ respectively. Denote $M\setminus\{\text{zeros and poles of }\omega\}$ by $M^{*}$, $\sqrt{-\frac{4}{3}(K-K_{1})(K-K_{2})(K+K_{1}+K_{2})}$ by $\mu=\mu(K)$. If for any point $P\in M^{*}$, there exist an open neighborhood $P\in U\subseteq M^{*}$ and an isometric minimal immersion $F:U\rightarrow \mathbb{Q}^{3}_{c}$, then there is a complex value function $h$ such that
\begin{equation*}
\begin{cases}
K=c-\frac{4|h|^{2}}{\mu^{2}},\\
b=-\frac{\mu'\mu h}{4},\\
a=\frac{3\mu'\mu h}{4}+\frac{\mu^{2}h}{4(K-c)},
\end{cases}
\end{equation*}
where $dh=a\omega+b\overline{\omega}$.
\end{lemma}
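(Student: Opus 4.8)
The plan is to fix a neighborhood $U\subseteq M^{*}$ carrying an isometric minimal immersion $F:U\to\mathbb{Q}^{3}_{c}$, extract its second fundamental form through Lemma \ref{L-1}, and repackage it as the complex function $h$. First I would adapt an orthonormal coframe to the character $1$-form. Since $g=\mu^{2}\omega\overline{\omega}$ on $M^{*}$ by (\ref{sys0}), I set $\theta:=\mu\omega$ and write $\theta=\omega^{1}+\sqrt{-1}\,\omega^{2}$ with $\omega^{1},\omega^{2}$ real, so that $\{\omega^{1},\omega^{2}\}$ is orthonormal and $g=(\omega^{1})^{2}+(\omega^{2})^{2}$. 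On $M^{*}$ the meromorphic form $\omega$ is holomorphic and nonzero, so $d\omega=0$; and (\ref{sys0}) gives the two identities
\[
dK=\frac{\mu^{2}}{4}(\omega+\overline{\omega}),\qquad d\mu=\mu'\,dK=\frac{\mu'\mu^{2}}{4}(\omega+\overline{\omega}),
\]
which drive every subsequent computation by turning the structure equations into algebraic relations in $K$.

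Next I would determine the connection form. The structure equations give $d\theta=-\sqrt{-1}\,\omega^{2}_{1}\wedge\theta$; substituting $\theta=\mu\omega$, using $d\omega=0$ and the formula for $d\mu$, and solving (the reality of $\omega^{2}_{1}$ pins down both of its coefficients) yields
\[
\omega^{2}_{1}=\frac{\sqrt{-1}\,\mu'\mu}{4}(\overline{\omega}-\omega).
\]
Using minimality and symmetry, Lemma \ref{L-1} lets me record the whole second fundamental form in the single complex function $H:=h_{11}+\sqrt{-1}\,h_{12}$, and expanding $\omega^{3}_{1}=h_{11}\omega^{1}+h_{12}\omega^{2}$, $\omega^{3}_{2}=h_{12}\omega^{1}-h_{11}\omega^{2}$ gives $\omega^{3}_{1}-\sqrt{-1}\,\omega^{3}_{2}=\overline{H}\,\theta=\mu\overline{H}\,\omega$. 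I then define $h:=\frac{\mu}{2}\overline{H}$, so that $\Phi:=\omega^{3}_{1}-\sqrt{-1}\,\omega^{3}_{2}=2h\,\omega$.

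Now the three conclusions follow from the Gauss and Codazzi equations of Lemma \ref{L-1}. The Gauss equation, combined with the intrinsic identity $d\omega^{2}_{1}=-K\,\omega^{1}\wedge\omega^{2}$, yields $h_{11}^{2}+h_{12}^{2}=c-K$, i.e. $|h|^{2}=\frac{\mu^{2}}{4}(c-K)$, which is precisely $K=c-\frac{4|h|^{2}}{\mu^{2}}$. For $b$ I combine the two Codazzi equations into the single complex identity $d\Phi=\sqrt{-1}\,\omega^{2}_{1}\wedge\Phi$; expanding $\Phi=2h\omega$ (again using $d\omega=0$) and inserting the explicit $\omega^{2}_{1}$ collapses this $2$-form equation, on the one-dimensional space of $(1,1)$-forms spanned by $\omega\wedge\overline{\omega}$, to the scalar relation $b=-\frac{\mu'\mu h}{4}$, where $dh=a\omega+b\overline{\omega}$.

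The formula for $a$ is the one step that genuinely uses differentiation, and is where I expect the real work to sit. I would differentiate the Gauss relation $4h\overline{h}=\mu^{2}(c-K)$, evaluate the right-hand side with $dK=\frac{\mu^{2}}{4}(\omega+\overline{\omega})$, insert the value of $b$ (hence of $\overline{b}$) already found, and compare $\omega$-components to get $4\overline{h}\,a=\frac{3}{4}\mu^{3}\mu'(c-K)-\frac{1}{4}\mu^{4}$. Eliminating $\overline{h}$ through $\overline{h}=\frac{\mu^{2}(c-K)}{4h}$ then produces $a=\frac{3\mu'\mu h}{4}-\frac{\mu^{2}h}{4(c-K)}=\frac{3\mu'\mu h}{4}+\frac{\mu^{2}h}{4(K-c)}$, as required. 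The only real obstacle is bookkeeping: keeping the two conjugate coefficients of $d(h\overline{h})$ separate and using the reality of $\mu$ and of $\omega^{2}_{1}$ so that the conjugate terms recombine into the stated closed form. Once $d\omega=0$ has reduced the structure equations to ODE-type relations in $K$, no step is conceptually hard.
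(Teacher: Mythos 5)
Your proposal is correct and follows essentially the same route as the paper: the same coframe $\mu\omega=\omega^{1}+\sqrt{-1}\,\omega^{2}$, the same connection form, Lemma \ref{L-1} to produce the second fundamental form, the Gauss equation for $K=c-\frac{4|h|^{2}}{\mu^{2}}$, the Codazzi equations for $b$, and differentiation of the Gauss relation for $a$. Your only (harmless) deviation is packaging $\omega^{3}_{1},\omega^{3}_{2}$ into the single complex form $\Phi=\omega^{3}_{1}-\sqrt{-1}\,\omega^{3}_{2}=2h\omega$, which makes your $h$ differ from the paper's by the constant unimodular factor $-\sqrt{-1}$ and leaves all three stated identities unchanged.
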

\begin{proof}
Set
\begin{equation*}
\begin{cases}
\omega^{1}=\frac{\omega+\overline{\omega}}{2}\mu,\\
\omega^{2}=\frac{\omega-\overline{\omega}}{2\sqrt{-1}}\mu.
\end{cases}
\end{equation*}
Then, by (\ref{sys0}),
\begin{equation*}
g=\mu^{2}\omega\overline{\omega}=(\omega^{1})^{2}+(\omega^{2})^{2},
\end{equation*}
and
\begin{equation*}
dK=\frac{\mu^{2}}{4}(\omega+\overline{\omega}).
\end{equation*}
Since
$$d\omega^{1}=\mu'(K)dK\wedge\frac{\omega+\overline{\omega}}{2}=0,$$
$$d\omega^{2}=\mu'dK\wedge\frac{\omega-\overline{\omega}}{2\sqrt{-1}}=\frac{\mu'}{2}\omega^{1}\wedge\omega^{2},$$
then the connection 1-form of $g$ is
\begin{equation*}
\omega^{2}_{1}=\frac{\mu'}{2}\omega^{2}.
\end{equation*}
By Lemma \ref{L-1}, there exist two 1-forms
\begin{equation*}
\begin{cases}
\omega_{1}^{3}=h_{11}\omega^{1}+h_{12}\omega^{2},\\
\omega_{2}^{3}=h_{21}\omega^{1}+h_{22}\omega^{2},
\end{cases}
\end{equation*}
satisfying
\begin{equation}\label{M-C}
\begin{cases}
h_{11}=-h_{22},\\
h_{12}=h_{21},
\end{cases}
\end{equation}
and
\begin{equation*}
\begin{cases}
d\omega_{1}^{2}=-\omega_{1}^{3}\wedge \omega_{2}^{3}-c\omega^{1}\wedge\omega^{2}~(\textbf{Gauss-equation}),\\
d\omega_{1}^{3}=\omega_{1}^{2}\wedge \omega_{2}^{3}~(\textbf{Codazzi-equation}),\\
d\omega_{2}^{3}=-\omega_{1}^{2}\wedge \omega_{1}^{3}~(\textbf{Codazzi-equation}).
\end{cases}
\end{equation*}
Assume
\begin{equation*}
\begin{cases}
\omega_{1}^{3}=f\omega+\overline{f}\overline{\omega},\\
\omega_{2}^{3}=h\omega+\overline{h}\overline{\omega}.
\end{cases}
\end{equation*}
Then
$$h_{11}=\frac{f+\overline{f}}{\mu},h_{12}=\frac{\sqrt{-1}(f-\overline{f})}{\mu},h_{21}=\frac{h+\overline{h}}{\mu},h_{22}=\frac{\sqrt{-1}(h-\overline{h})}{\mu}.$$
So, by (\ref{M-C}),
$$f=-\sqrt{-1}h.$$
Therefore,
\begin{equation*}
\begin{cases}
\omega_{1}^{3}=-\sqrt{-1}(h\omega-\overline{h}\overline{\omega}),\\
\omega_{2}^{3}=h\omega+\overline{h}\overline{\omega}.
\end{cases}
\end{equation*}
Since
\begin{equation*}
\begin{cases}
d\omega_{1}^{2}=-K\omega^{1}\wedge\omega^{2},\\
\omega_{1}^{3}\wedge\omega_{2}^{3}=\frac{-4|h|^{2}}{\mu^{2}}\omega^{1}\wedge\omega^{2},
\end{cases}
\end{equation*}
then the Gauss-equation becomes
$$K=c-\frac{4|h|^{2}}{\mu^{2}}.$$
Let $dh=a\omega+b\overline{\omega}$, then $d\overline{h}=\overline{a}~\overline{\omega}+\overline{b}\omega$, and
\begin{equation*}
\begin{cases}
d\omega_{1}^{3}=\sqrt{-1}(b+\overline{b})\omega\wedge\overline{\omega},\\
d\omega_{2}^{3}=(\overline{b}-b)\omega\wedge\overline{\omega}.
\end{cases}
\end{equation*}
Since
\begin{equation*}
\begin{cases}
\omega_{1}^{2}\wedge\omega_{2}^{3}=\frac{\mu'\mu}{4\sqrt{-1}}(h+\overline{h})\omega\wedge\overline{\omega},\\
\omega_{1}^{2}\wedge\omega_{1}^{3}=\frac{-\mu'\mu}{4}(h-\overline{h})\omega\wedge\overline{\omega},
\end{cases}
\end{equation*}
then the Codazzi-equation becomes
\begin{equation*}
\begin{cases}
b+\overline{b}=\frac{-\mu'\mu}{4}(h+\overline{h}),\\
b-\overline{b}=\frac{-\mu'\mu}{4}(h-\overline{h}),
\end{cases}
\end{equation*}
i.e.,
\begin{equation*}
b=-\frac{\mu'\mu}{4}h.
\end{equation*}
To sum up, we get
\begin{equation*}
\begin{cases}
K=c-\frac{4|h|^{2}}{\mu^{2}}~~(\textbf{Gauss-equation}),\\
b=-\frac{\mu'\mu h}{4}~~(\textbf{Codazzi-equation}).
\end{cases}
\end{equation*}
Differentiating two sides of the Gauss-equation, we get
$$a\overline{h}+\overline{b}h=-\frac{\mu^{2}[2\mu'\mu (K-c)+\mu^{2}]}{16}.$$
Since $b=-\frac{\mu'\mu}{4}h$, then
$$a=-\frac{\mu^{2}[3\mu'\mu (K-c)+\mu^{2}]}{16\overline{h}}=\frac{3\mu'\mu h}{4}+\frac{\mu^{2}h}{4(K-c)}.$$
\end{proof}

\begin{lemma}\label{L-3}
There does not exist a function $h$ satisfying the conditions in Lemma \ref{L-2}.
\end{lemma}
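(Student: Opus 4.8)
The plan is to argue by contradiction: assume a function $h$ as in Lemma \ref{L-2} exists on $U$, and extract from its defining relations an overdetermined system whose integrability condition fails identically. Throughout I write $A(K)=\frac{3\mu'\mu}{4}+\frac{\mu^{2}}{4(K-c)}$ and $B(K)=-\frac{\mu'\mu}{4}$, where $'$ denotes $\frac{d}{dK}$, so that the last two displayed equations of Lemma \ref{L-2} read $a=A(K)h$ and $b=B(K)h$; crucially, $A$ and $B$ depend on the point only through $K$. The two structural facts I will lean on are that the character $1$-form $\omega$ is meromorphic, hence closed on $M^{*}$ (so $d\omega=d\overline{\omega}=0$), and that $dK=\frac{\mu^{2}}{4}(\omega+\overline{\omega})$, which was already established in the proof of Lemma \ref{L-2}.

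First I would impose the one constraint not yet exploited, namely the integrability $d(dh)=0$ coming from $dh=a\omega+b\overline{\omega}$. Since $\omega,\overline{\omega}$ are closed, $d(dh)=da\wedge\omega+db\wedge\overline{\omega}$. Writing $da=A'(K)\,dK\,h+A(K)\,dh$ and $db=B'(K)\,dK\,h+B(K)\,dh$, substituting $dK=\frac{\mu^{2}}{4}(\omega+\overline{\omega})$ and $dh=a\omega+b\overline{\omega}$, and collecting the coefficient of $\omega\wedge\overline{\omega}$, the bilinear cross terms cancel because $Ba-Ab=ABh-ABh=0$, leaving
\[
d(dh)=\Big(B'(K)-A'(K)\Big)\frac{\mu^{2}}{4}\,h\;\omega\wedge\overline{\omega}.
\]
As $d(dh)=0$, the existence of $h$ forces $\big(B'(K)-A'(K)\big)\mu^{2}h\equiv0$ on $U$.

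Next I would strip off the factors $\mu^{2}$ and $h$. On $M'$ one has $\mu\neq0$, while the Gauss equation gives $|h|^{2}=\frac{\mu^{2}(c-K)}{4}$; since $K$ is non-constant on any open subset of $M'$ (because $dK=\frac{\mu^{2}}{4}(\omega+\overline{\omega})\neq0$ there), the set $\{h\neq0\}$ is a nonempty open subset of $U$ on which $K$ sweeps out an interval of values. On that interval we conclude $A'(K)=B'(K)$, i.e. $A-B$ is constant as a function of $K$.

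The heart of the matter, and the step I expect to be the real obstacle, is to show that $A-B$ \emph{cannot} be constant, since everything above is formal. Using $\mu'\mu=\tfrac12(\mu^{2})'$ one computes $A-B=\mu'\mu+\frac{\mu^{2}}{4(K-c)}=\tfrac12(\mu^{2})'+\frac{\mu^{2}}{4(K-c)}$, and constancy $A-B\equiv\lambda$ is equivalent, after multiplying by $4(K-c)$, to the polynomial identity $2(K-c)(\mu^{2})'+\mu^{2}=4\lambda(K-c)$. Here $\mu^{2}=-\frac{4}{3}(K-K_{1})(K-K_{2})(K+K_{1}+K_{2})$ is a genuine cubic in $K$, so the left-hand side has degree $3$ with leading coefficient $-\frac{28}{3}\neq0$, whereas the right-hand side is linear; comparing the coefficient of $K^{3}$ yields $-\frac{28}{3}=0$, a contradiction. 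This degree/coefficient comparison is precisely the rigid feature of the HCMU cubic that defeats the minimal-immersion ansatz, and since the offending leading coefficient is independent of $c$, it makes the non-existence hold for every $c$ simultaneously.
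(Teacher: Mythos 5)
Your proposal is correct and follows essentially the same route as the paper: both impose $d^{2}h=0$ on $dh=a\omega+b\overline{\omega}$, reduce the resulting integrability condition to an identity in $K$ alone, and kill it by comparing leading coefficients of polynomials in $K$ (your condition $(A-B)'=0$ is exactly the paper's equation $4\mu''\mu(K-c)^{2}+4(\mu')^{2}(K-c)^{2}+2\mu'\mu(K-c)-\mu^{2}=0$ after multiplying by $4(K-c)^{2}$). Your repackaging via $a=A(K)h$, $b=B(K)h$ and the observation that $A-B$ would have to be constant is a slightly cleaner way to organize the same computation, and your care in checking that $\{h\neq0\}$ is a nonempty open set on which $K$ sweeps an interval is a detail the paper leaves implicit.
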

\begin{proof}
Since $dh=a\omega+b\overline{w}$, so
$$d^{2}h=d(a\omega+b\overline{\omega})=da\wedge\omega+db\wedge \overline{\omega}=0.$$
Since
\begin{equation*}
\begin{aligned}
&da\equiv \frac{\mu^{3}h}{16}[3\mu''+\frac{\mu'}{K-c}-\frac{\mu}{(K-c)^{2}}]\overline{\omega}~~(mod ~\omega),\\
&db\equiv-\frac{\mu^{2}h}{16}[\mu''\mu+4(\mu')^{2}+\frac{\mu'\mu}{K-c}]\omega~~(mod~\overline{\omega}),\\
&da\wedge\omega=\frac{\mu^{3}h}{16}[3\mu''+\frac{\mu'}{K-c}-\frac{\mu}{(K-c)^{2}}]\overline{\omega}\wedge\omega,\\
&db\wedge\overline{\omega}=\frac{\mu^{2}h}{16}[\mu''\mu+4(\mu')^{2}+\frac{\mu'\mu}{K-c}]\overline{\omega}\wedge\omega,
\end{aligned}
\end{equation*}
so
$$da\wedge\omega+db\wedge \overline{\omega}=\frac{\mu^{2}h}{16(K-c)^{2}}[4\mu''\mu (K-c)^{2}+4(\mu')^{2}(K-c)^{2}+2\mu'\mu (K-c)-\mu^{2}]\overline{\omega}\wedge\omega=0.$$
Thus
\begin{equation}\label{L-E}
4\mu''\mu (K-c)^{2}+4(\mu')^{2}(K-c)^{2}+2\mu'\mu (K-c)-\mu^{2}=0.
\end{equation}
Suppose
$$\mu=\sqrt{-\frac{4}{3}(K-K_{1})(K-K_{2})(K+K_{1}+K_{2})}=(-\frac{4}{3}K^{3}+\lambda_{1}K+\lambda_{2})^{1/2},$$
where $\lambda_{1}=\frac{3}{4}(K_{1}^{2}+K_{2}^{2}+K_{1}K_{2}),\lambda_{2}=\frac{3}{4}K_{1}K_{2}(K_{1}+K_{2})$,
then
$$\mu'=\frac{1}{2}(-\frac{4}{3}K^{3}+\lambda_{1}K+\lambda_{2})^{-1/2}(-4K^{2}+\lambda_{1}),$$
$$\mu''=-\frac{1}{4}(-\frac{4}{3}K^{3}+\lambda_{1}K+\lambda_{2})^{-3/2}(-4K^{2}+\lambda_{1})^{2}-4K(-\frac{4}{3}K^{3}+\lambda_{1}K+\lambda_{2})^{-1/2},$$
$$\mu\mu''=-\frac{1}{4}(-\frac{4}{3}K^{3}+\lambda_{1}K+\lambda_{2})^{-1}(-4K^{2}+\lambda_{1})^{2}-4K,$$
$$(\mu')^{2}=\frac{1}{4}(-\frac{4}{3}K^{3}+\lambda_{1}K+\lambda_{2})^{-1}(-4K^{2}+\lambda_{1})^{2},$$
$$\mu\mu'=\frac{1}{2}(-4K^{2}+\lambda_{1}),$$
$$\mu\mu''+(\mu')^{2}=-4K,$$
So
$$
4\mu''\mu (K-c)^{2}+4(\mu')^{2}(K-c)^{2}\neq\mu^{2}-2\mu'\mu (K-c),
$$
that is the identity (\ref{L-E}) is not true.
\end{proof}
The proof of Theorem \ref{mainth} obtains from  Lemmas \ref{L-2},\ref{L-3}.\par


\par\vskip0.3cm
\noindent
Zhiqiang Wei\\
School of Mathematical and Statistics \\
Henan University \\
Kaifeng 475004 P.R. China \\
weizhiqiang15@mails.ucas.ac.cn\\
\vskip0.3cm
 \noindent
 Yingyi Wu \\
School of Mathematical Sciences \\
University of Chinese Academy of Sciences \\
Beijing 100049 P.R. China \\
wuyy@ucas.ac.cn
\end{document}